\newtheorem{theorem}{Theorem}
\newtheorem{lemma}{Lemma}
\begin{document}

\title[]{ Balanced stick breaking}

\author[]{Fran\c{c}ois Cl\'ement \and Stefan Steinerberger}

\address{Department of Mathematics, University of Washington, Seattle}
 \email{fclement@uw.edu }
 \email{steinerb@uw.edu}

\begin{abstract} 
 Consider an infinite sequence $(x_k)_{k=1}^{\infty}$ on the unit circle $\mathbb{S}^1$. We may interpret the first $n$ elements $(x_k)_{k=1}^{n}$ as places where the `circular stick' $\mathbb{S}^1$ is broken into a total of $n+1$ pieces. It is clear that they cannot all be the same length all the time. de Bruijn and Erd\H{o}s (1949) show that the ratio of the largest to the smallest has to be arbitrarily close to 2 infinitely many times which is sharp. They also consider the problem of balancing the length of $r$ consecutive intervals and prove 
 $$ \frac{\max \mbox{length of}~r~\mbox{consecutive intervals}}{\min \mbox{length of}~r~\mbox{consecutive intervals}} \geq 1 + \frac{1}{r}.$$
 We prove that this ratio can be as small as $1 + c \log{r}/ r$. This is done by means of refined discrepancy estimates for the van der Corput sequence over very short intervals and proves a conjecture of Brethouwer.
\end{abstract}

\subjclass[2020]{}
\keywords{}

\maketitle

\vspace{-0pt}

\section{Introduction and Results}
\subsection{Introduction}
Let $(x_k)_{k=1}^{\infty}$ be an infinite sequence on the unit circle $\mathbb{S}^1 \cong [0,1)$. We interpret the sequence as places where the `circular stick' $\mathbb{S}^1$ is being broken. After the first $n$ steps $(x_k)_{k=1}^{n}$ of the procedure, we have $n+1$ intervals.
Naturally, these sticks cannot all have the same length at every step of the process and the question is whether this can be made precise. This was made quantitative by de Bruijn and Erd\H{o}s in three different ways.
\begin{center}
    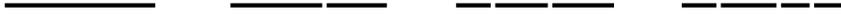
\begin{figure}[h!]
      \begin{tikzpicture}[scale=2]
          \draw [ultra thick] (0,0) -- (1,0);
          \draw [ultra thick] (2-0.5,0) -- (2.61-0.5,0);
          \draw [ultra thick] (2.64-0.5,0) -- (3.04-0.5,0);
          \draw [ultra thick]  (3, 0) -- (3.23, 0);
           \draw [ultra thick]  (3.26, 0) -- (3.61, 0);
            \draw [ultra thick]  (3.64, 0) -- (4.05, 0);
     \draw [ultra thick]  (4.5, 0) -- (4.73, 0);     
     \draw [ultra thick]  (4.76, 0) -- (5.13, 0);  
     \draw [ultra thick]  (5.16, 0) -- (5.35, 0);  
     \draw [ultra thick]  (5.38, 0) -- (5.6, 0);       
      \end{tikzpicture}
      \caption{Iteratively breaking a stick.}
    \end{figure}
\end{center}

\begin{theorem}[de Bruijn and Erd\H{o}s \cite{deb}, 1949]
For any sequence $(x_k)_{k=1}^{\infty}$ we have
\begin{align*}
    \limsup_{n \rightarrow \infty} \quad n \cdot (\emph{longest interval after}~n~\emph{steps}) &\geq \frac{1}{\log{2}} \\
 \limsup_{n \rightarrow \infty} \quad n \cdot (\emph{shortest interval after}~n~\emph{steps}) &\leq \frac{1}{\log{4}}
 \end{align*}
 and
 $$
  \limsup_{n \rightarrow \infty} \quad \frac{ \emph{longest interval after}~n~\emph{steps}}{ \emph{shortest interval after}~n~\emph{steps}} \geq 2.$$
\end{theorem}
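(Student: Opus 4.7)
\emph{Plan.} The third claim follows quickly from the first two. Fix $\varepsilon > 0$. By the second claim, $\ell_n \leq (1/\log 4 + \varepsilon)/n$ for all $n$ sufficiently large; by the first claim, there is a subsequence $(n_k)$ along which $L_{n_k} \geq (1/\log 2 - \varepsilon)/n_k$. Dividing along this subsequence and letting $\varepsilon \to 0$ gives
\[
\limsup_{n \to \infty} \frac{L_n}{\ell_n} \;\geq\; \frac{1/\log 2}{1/\log 4} \;=\; 2.
\]

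For the first claim, I would argue by contradiction. Assume $L_n \leq c/n$ for all $n \geq N_0$ with $c < 1/\log 2$. The dynamical heart of the matter is that $L_n$ is non-increasing and can strictly decrease only when the (unique) longest interval, of length $L_n$, is split; in that event at least one of the two resulting pieces has length $\geq L_n/2$, so $L_{n+1} \geq L_n/2$. Let $t_0 < t_1 < \dots$ be the times at which $L_n$ strictly decreases and let $L_{(k)}$ denote the constant value of $L$ on the epoch $(t_{k-1}, t_k]$. Combining the halving inequality $L_{(k+1)} \geq L_{(k)}/2$ with the hypothesis $L_{(k)} \leq c/t_k$ and the identity $\sum_{n \leq N} L_n = \sum_k (t_k - t_{k-1}) L_{(k)}$ should produce a lower bound on $\sum_{n \leq N} L_n$ of size $(1+o(1))\log_2 N$, matching the value realized by the greedy (Kakutani) scheme. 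Pairing this with the upper bound $\sum_{n \leq N} L_n \leq c \sum_{n \leq N} 1/n \sim c\log N$ yields $c \log N \gtrsim \log_2 N$, whence $c \geq 1/\log 2$, a contradiction.

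The second claim is proved by a dual argument on the shortest interval. If $\ell_n \geq c/n$ for all large $n$ with $c > 1/\log 4$, then every split produces two pieces each of length $\geq c/n$, so the split parent must have length $\geq 2c/n$. A parallel epoch-based analysis tracking the mass of such large parents across dyadic time scales yields the averaged bound forcing $c \leq 1/\log 4$.

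The main obstacle is the sharpness of the constants $1/\log 2$ and $1/\log 4$. A naive volume count based only on $\sum_i \ell_i = 1$ yields merely the trivial bounds $c \geq 1$ and $c \leq 1$; extracting the extra factor $\log 2$ forces one to use the binary tree structure of the splits, specifically the halving estimate $L_{(k+1)} \geq L_{(k)}/2$ and its dual. Making the epoch-based optimization quantitative, and in particular handling ties among the longest (resp. shortest) intervals and the behaviour near epoch boundaries, is where care is required.
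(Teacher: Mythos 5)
The paper does not prove Theorem~1 --- it is quoted from de~Bruijn and Erd\H{o}s (1949) with references --- so there is no internal proof to compare against. Judging your sketch on its own: the deduction of the third inequality from the first two is correct. The difficulty is that your mechanism for the first two does not close. The halving inequality $L_{n+1}\ge L_n/2$ together with the hypothesis $L_n\le c/n$ does \emph{not} force $\sum_{n\le N}L_n\ge(1+o(1))\log_2 N$; the sequence $L_n=c/n$ already satisfies the halving inequality and has $\sum_{n\le N}L_n\sim c\log N<\log_2 N$ for $c<1/\log 2$, so your epoch bookkeeping cannot produce the claimed lower bound from those hypotheses alone. The halving inequality controls the \emph{rate of decrease} of $L_n$, but the constraint $L_n\le c/n$ already decreases slowly enough that halving never bites.

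The ingredient you are missing is combinatorial, not metric: between times $m$ and $n$ only $n-m$ new points are placed, hence at most $n-m$ of the intervals alive at time $m$ can be split before time $n$, and every interval alive at time $m$ of length $>L_n$ \emph{must} be split by time $n$. Sorting the time-$m$ intervals in decreasing order $I_1\ge I_2\ge\cdots\ge I_{m+1}$, the $j$-th one therefore satisfies $I_j\le L_{m+j-1}$, and summing gives the key inequality
\[
1=\sum_{j=1}^{m+1}I_j\ \le\ \sum_{k=m}^{2m}L_k .
\]
If $L_k\le c/k$ for all $k\ge m$, the right side is $c\sum_{k=m}^{2m}1/k\to c\log 2$, forcing $c\ge 1/\log 2$. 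This is the step that extracts the factor $\log 2$; it has nothing to do with $L_{n+1}\ge L_n/2$. The second claim admits a dual treatment (each split at time $t$ destroys an interval of length $\ge 2\ell_t$, and one books the total length destroyed/surviving over a dyadic window), but your phrase ``tracking the mass of such large parents'' again needs the split-counting input to become an inequality; as written it is a heuristic, not a proof.
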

Theorem 1 was also established by Ostrowski \cite{ost0, ost}, Sch\"onhage \cite{schonhage} and Toulmin \cite{toulmin}. de Bruijn and Erd\H{o}s show that all three results are sharp and that an extremal example for all three is given by the sequence
$$ x_k = \log_2(2k-1) \mod 1.$$
Basic properties of the logarithm make an analysis of this sequence fairly straightforward \cite{ram}.
Variations were considered by Anholcer, Bosek, Grytczuk, Gutowski,  Przybylo, Pyzik and Zajac \cite{anh}, Chung-Graham \cite{chung0, chung, chung1, chung2} and Groemer \cite{groemer}.

\subsection{Main result} One very natural variation already considered in the original paper is to not consider a single interval but instead $r$ consecutive intervals.  de Bruijn and Erd\H{o}s prove that
 $$\limsup_{n \rightarrow \infty} \quad \frac{ \mbox{largest length of}~r~\mbox{consecutive intervals}}{   \mbox{smallest length of}~r~\mbox{consecutive intervals}} \geq 1 + \frac{1}{r}.$$
The case $r=1$ recovers their original result. This problem appears to be completely open for every $r \geq 2$. 
de Bruijn and Erd\H{o}s  conjecture that their lower bound is \textit{not} optimal and that $1/r$ can be replaced by $f(r)/r$ where $f(r) \rightarrow \infty$ as $r \rightarrow \infty$. We show that $f(r)$ cannot grow faster than $\log{r}$.

\begin{theorem}
    There exists a sequence $(x_k)_{k=1}^{\infty}$ in $[0,1] \cong \mathbb{S}^1$ and a universal constant $0 < c < \infty$ such that for all $r \in \mathbb{N}$ and all $n \in \mathbb{N}$ sufficiently large (depending on $r$)  the first $n$ elements of the sequence satisfy
    $$ \frac{ \emph{largest length of}~r~\emph{consecutive intervals}}{   \emph{smallest length of}~r~\emph{consecutive intervals}} \leq 1 + \frac{c\log{r}}{r}.$$
\end{theorem}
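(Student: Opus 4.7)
The plan is to take $(x_k)$ to be the van der Corput sequence in base $2$. For $n$ in the dyadic range $2^m \leq n < 2^{m+1}$, every breakpoint lies on the grid $\{j/2^{m+1}\}_{j=0}^{2^{m+1}}$, and the individual interval lengths take only the two values $1/2^{m+1}$ and $1/2^m$. The total length $L$ of any block of $r$ consecutive intervals equals $b-a$, where $a,b$ are breakpoints with precisely $r-1$ breakpoints of the sequence strictly between them. The proof thus reduces to establishing the sharpened local discrepancy estimate
\[ \bigl|N_n(I) - n|I|\bigr| \leq C \log r, \]
uniformly over all intervals $I=(a,b)$ of this type, where $N_n(I)$ denotes the number of $x_k$ with $k \leq n$ lying inside $I$. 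Granted this, $N_n(I) = r-1$ forces $L = r/n + O(\log r/n)$ uniformly across windows, from which the ratio bound $1 + O(\log r/r)$ follows at once.

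To prove the estimate, I would first establish the dyadic case: for every dyadic interval $D = [j/2^k, (j+1)/2^k)$ with $k \leq m+1$, the van der Corput sequence satisfies $|N_n(D) - n/2^k| \leq 1$. Writing $n = 2^m + t$, the first $2^m$ points contribute exactly $2^{m-k}$ to $D$, while the $t$ midpoints added thereafter are indexed by the bit-reversal set $\{j : \mathrm{rev}_m(j) < t\}$; the intersection of this set with any dyadic block of $\{0, \ldots, 2^m - 1\}$ reduces, under bit-reversal, to counting an arithmetic progression inside $[0, t)$, incurring error at most $1$. To pass to general $I$: since $a, b$ are multiples of $1/2^{m+1}$, after rescaling by $2^{m+1}$ the interval $I$ has integer length $\ell = 2^{m+1}|I|$. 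A standard segment-tree decomposition writes any integer interval of length $\ell$ as a disjoint union of at most $C\log \ell$ dyadic intervals, each of size at most $\ell$. Summing the $O(1)$ per-piece discrepancy over $O(\log \ell)$ pieces yields the full estimate, and since $n|I| \approx r$ and $n \asymp 2^m$ one has $\ell \asymp r$, hence the right-hand side is $C \log r$.

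The main obstacle is securing the scaling $\log r$ rather than the naive $\log n$: a direct application of the standard discrepancy bound for the van der Corput sequence gives only the Erd\H{o}s--Tur\'an-type ratio $1 + O(\log n / r)$, which is vacuous when $r$ is much smaller than $n$. The improvement relies on the observation that the decomposition complexity of an integer interval depends on its length $\ell$ and not on the ambient resolution $2^{m+1}$: one embeds $I$ inside the smallest dyadic super-block of size at least $2\ell$ and carries out the decomposition within that block, producing $O(\log \ell) = O(\log r)$ pieces uniformly in $m$. A secondary technical subtlety is the case where $I$ straddles two such super-blocks, which contributes only a constant factor to the piece count, and one must also handle the situation where $a$ or $b$ coincides with an auxiliary endpoint of $\mathbb{S}^1$.
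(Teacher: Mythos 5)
Your proposal is correct, and it takes a genuinely different and noticeably more direct route than the paper. The paper's proof of Theorem~3 for van der Corput rests on a structural ``Main Lemma'' (proved via an intricate induction and a bijective matching of elements, Lemmas~\ref{lem:shift}--\ref{lem:induc}) asserting that among all $r$-blocks the leftmost one $[0,x_r]$ is always shortest and the rightmost one $[x_{n+1-r},1]$ is always longest; this localizes the problem to the two endpoint intervals, which are then handled via the self-similar rescaling $\frac{2^k}{2^{a+1}} x_{2^k+m 2^{k-a-1}} = x_m + \frac{1}{2^{a+2}}$ together with the classical $O(\log)$ discrepancy bound. Your argument skips the extremality lemma entirely: you use the elementary fact that for any dyadic interval $D=[j/2^k,(j+1)/2^k)$ one has $x_m\in D$ iff $m$ lies in a fixed residue class mod $2^k$, giving $|N_n(D)-n/2^k|\le 1$ \emph{uniformly} over all $n,k$, and then you decompose an arbitrary grid interval of integer length $\ell$ into $O(\log\ell)$ dyadic pieces (climbing inward from the two $2^{\lceil\log_2\ell\rceil}$-aligned boundaries, so the piece count depends only on $\ell$ and not on the ambient resolution $2^{m+1}$). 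Summing the per-piece error gives $O(\log\ell)=O(\log r)$ since the two-gap structure forces $r\le\ell\le 2r$. This is a cleaner path to the same $c\log r$ bound and, unlike the paper's route, is not tied to the specific left/right-extremality of van der Corput; the cost is that it gives no structural information about \emph{where} the densest and sparsest windows sit. One small caution: the paper works on $\mathbb S^1$ with the convention $x_0=0$ adjoined as a breakpoint (which is what keeps the gap structure two-valued), so your wrap-around case must include $0$ as a grid point when splitting the interval at it --- you flag this, and it is only an $O(1)$ adjustment, but it is needed for the claim that the gaps take exactly the two values $2^{-(m+1)}$ and $2^{-m}$.
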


We give two structurally very different examples which are both classical in the study of irregularity of distributions.  Our first example is the sequence of irrational rotations $\left\{ \phi \right\},  \left\{ 2\phi \right\},  \left\{ 3\phi \right\}, \left\{ 4\phi \right\}, \dots$
where $\left\{x\right\} = x - \left\lfloor x \right\rfloor$ is the fractional part of $x$ and
$ \phi = (1+\sqrt{5})/2$
is the golden ratio. This sequence is very well-understood. The $r=1$ case of Theorem 2 is implied by a detailed study of the gaps undertaken by Ravenstein \cite{raven}, see also the 2014 PhD thesis of Habib \cite{habib}. Our second example is the van der Corput sequence
$$\frac12, \frac14, \frac34, \frac18, \frac58, \frac38, \frac78, \dots$$
Brethouwer in his 2024 PhD thesis \cite{bret} did an extensive numerical analysis of the van der Corput sequence and explicitly conjectured Theorem 2. He also conjectured that the rate in Theorem 2 might be optimal.

\subsection{Discrepancy at small scales} The Kronecker sequence $(\left\{k \phi\right\})_{k=1}^{\infty}$ and the van der Corput sequence are two of the best-known sequences in $[0,1]$ and have been intensively studied. In particular, if $(x_k)_{k=1}^{\infty}$ denotes either of the two sequences, then it is known that, for all $0 \leq x \leq 1$ and all $n \in \mathbb{N}$
$$ \# \left\{1 \leq k \leq n:  0 \leq x_k \leq x \right\} = x n + \mathcal{O}(\log{n}).$$
A celebrated result of Schmidt \cite{schmidt} shows that the error term $\mathcal{O}(\log{n})$ is the smallest possible error term for any sequence. Theorem 2 follows from showing that we can obtain a tighter error bound if we restrict ourselves to very short intervals.

\begin{theorem} Let $(x_k)_{k=1}^{\infty}$ denote either the van der Corput sequence in base 2 or the Kronecker sequence $x_k = \left\{ k \phi\right\}$. There exists a universal constant $c>0$ so that for all $r \in \mathbb{N}$ and all $n \in \mathbb{N}$ sufficiently large (depending on $r$) and all $0 \leq x \leq 1-r/n$
    $$ \left| \# \left\{1 \leq k \leq n:  x \leq x_k \leq x + \frac{r}{n} \right\} - r \right| \leq c \cdot \log{r}.$$
\end{theorem}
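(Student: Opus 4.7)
The plan is to give structural proofs for both sequences simultaneously, exploiting a natural decomposition of the first $n$ terms into arithmetic-progression-like \emph{blocks} whose sizes are governed by a canonical representation of $n$ (binary for van der Corput, Zeckendorf for Kronecker). The usual global discrepancy bound $|\#\{k \leq n : x_k \in J\} - n|J|| = O(\log n)$ is too crude for our purposes; the target $O(\log r)$ will be extracted by a finer accounting over these blocks.

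For the van der Corput sequence I would write $n = 2^{a_1} + 2^{a_2} + \dots + 2^{a_s}$ with $a_1 > \dots > a_s \geq 0$. A well-known observation is that the first $n$ points decompose into $s$ blocks $B_i$, where $B_i$ is an arithmetic progression of $2^{a_i}$ points on $\mathbb{S}^1$ with step $1/2^{a_i}$ and offset $c_i = \sum_{j<i} 2^{-a_j-1}$. For the interval $I = [x, x+r/n]$, the count of $B_i$ in $I$ always lies in $\{\lfloor 2^{a_i} r/n \rfloor, \lceil 2^{a_i} r/n \rceil\}$. I split the blocks into \emph{large} ones with $2^{a_i} \geq n/r$ and \emph{small} ones with $2^{a_i} < n/r$. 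At most $\log_2 r + O(1)$ blocks are large, each contributing rounding error at most $1$ and hence $O(\log r)$ in total; meanwhile, the ``expected'' contributions $2^{a_i} r/n$ over small blocks sum to $O(1)$ because distinct powers of $2$ below $n/r$ sum to less than $2n/r$. Collecting these estimates, Theorem~3 reduces to the combinatorial bound
\begin{equation*}
\#\bigl\{ i : 2^{a_i} < n/r \text{ and } I \text{ contains a point of } B_i \bigr\} = O(\log r).
\end{equation*}
The event inside is equivalent to the Diophantine inequality $\{2^{a_i}(x - c_i)\} \geq 1 - 2^{a_i} r/n$, and the main technical step would be to show that at most $O(\log r)$ small indices satisfy it simultaneously. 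Since the offsets $c_i$ are highly structured dyadic rationals encoding the binary digits of $n$, the conditions for different small blocks form a nested family constraining the binary digits of $x$ in overlapping ranges. My plan is to exploit this structure by induction on $s$, peeling off the highest-order bit of $n$ at each step and telescoping the resulting bit constraints.

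The Kronecker case follows the same scheme: the binary expansion of $n$ becomes the Zeckendorf representation $n = F_{m_1} + F_{m_2} + \dots$ with $m_j - m_{j+1} \geq 2$, powers of $2$ become Fibonacci numbers, and the three-distance theorem supplies the block structure at each Fibonacci truncation. The fact that $\phi$ has bounded (indeed all-$1$) continued fraction partial quotients is what makes the Diophantine control over small Fibonacci-blocks succeed with the same $O(\log r)$ worst-case rate. The main obstacle in both settings is identical: on average over $x$, the number of small blocks depositing a point in $I$ is only $O(1)$, so the proof must rule out a worst-case conspiracy in which up to $\log_2(n/r)$ small blocks all hit the short interval $I$. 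The specific arithmetic of the offsets---dyadic rationals for van der Corput, Ostrowski-expansion entries for Kronecker---is essential here, and the logarithmic rate, conjectured to be sharp by Brethouwer~\cite{bret}, appears to reflect an inevitable combinatorial cost of this kind of analysis.
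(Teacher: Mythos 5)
Your block decomposition is correct and well-known: writing $n = 2^{a_1}+\cdots+2^{a_s}$ with $a_1>\cdots>a_s$, the van der Corput points $\{x_m : N_{i-1}\le m < N_i\}$ (where $N_i = \sum_{j\le i}2^{a_j}$, including $x_0=0$) do form arithmetic progressions $B_i = \{c_i + j\,2^{-a_i}\}$ with $c_i = \sum_{j<i}2^{-a_j-1}$. Your bookkeeping reducing the theorem to the claim
\[
\#\{\, i : 2^{a_i}<n/r \text{ and } I\cap B_i \neq \emptyset \,\} = O(\log r)
\]
is also correct, and this claim is indeed equivalent (given the decomposition) to the statement of the theorem. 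This is a genuinely different route from the paper, which instead proves an \emph{ordering lemma} showing that among all windows of $r$ consecutive sorted elements, the one anchored at $0$ is shortest and the one anchored at $1$ is longest; the theorem is then reduced to computing the discrepancy of the single one-sided interval $[0,\ell/2^k]$ via a self-similarity identity $\tfrac{2^k}{2^{a+1}}x_{2^k+m2^{k-a-1}} = x_m + 2^{-a-2}$, which feeds into the classical $O(\log N)$ discrepancy bound at an effective scale of size $\sim r$.

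The difficulty is that you have not proved the claim that matters. You correctly identify it as ``the main technical step'' and propose an induction ``peeling off the highest-order bit of $n$ \dots and telescoping the resulting bit constraints,'' but this is left entirely as a plan, and it is exactly the content of the theorem. A priori there are up to $\sim\log_2(n/r)$ small blocks, each hitting $I$ with probability $2^{a_i}r/n$; the average number of hits is $O(1)$, but ruling out a worst-case conspiracy where $\Theta(\log n)$ of them all land in the same length-$r/n$ window is the entire problem. It is also worth noting that lumping the union of all small blocks into a single tail $\{x_m : N_t\le m\le n\} = c_{t+1} + \{x_m : 0\le m\le n-N_t\}$ does \emph{not} rescue the argument for van der Corput: that tail has up to $2n/r$ points and the generic van der Corput discrepancy gives only $O(\log(n/r))$, not $O(\log r)$, for an interval of length $r/n$. (This is the key asymmetry with the Kronecker case, where the two-gap lemma yields $O(1)$ discrepancy for \emph{any} interval at every Fibonacci truncation, so the whole tail is controlled in one stroke; there your sketch is essentially the paper's argument.) So the van der Corput half of your proposal has a real gap: the combinatorial/Diophantine lemma bounding how many dyadic congruence conditions $\{2^{a_i}x\}\in(\delta_i-\varepsilon_i,\delta_i)$ can hold simultaneously is asserted, plausible, and in fact true, but it is not proven, and it is the heart of the matter.
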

We actually show a slightly stronger result: one can consider all intervals of length $r/n$ on $\mathbb{S}^1$. This implies Theorem 2.  We prove Theorem 3 for the van der Corput sequence in \S 2 and for the Golden Ratio Kronecker sequence $x_k = \left\{k \phi\right\}$ in \S 3. These two proofs are completely independent of each other and use very different structures and arguments. It is very conceivable that these arguments could be extended to the van der Corput sequence in base $b$ and the Kronecker sequence $x_k = \left\{k \alpha \right\}$ for any badly approximable $\alpha \in \mathbb{R}$.\\ 
It would be interesting to have structurally different types of examples exhibiting this type of regularity at very short scales. We are not aware of any results in that direction. There is the related concept of pair correlation: consider 
$$ F_N(s) = \frac{1}{N} \# \left\{ 1 \leq m \neq n \leq N: \|x_m - x_n\| \leq \frac{s}{N} \right\}.$$
For example, a sequence is said to have Poissonian Pair Correlation if, for all $s>0$, we have that
$ \lim_{N \rightarrow \infty} F_N(s) = 2s.$
This purely local property is known to have global implications \cite{aist, stein0, stein}. The typical example of a sequence with Poissonian Pair Correlation is a sequence of independent and uniformly distributed random variables which then has the desired property almost surely. Sequences like the van der Corput sequence and the Kronecker sequences have a rather rigid gap structure and it is easy to see that they do not exhibit Poissonian Pair Correlation. However, in light of Theorem 3, they satisfy the structural property that for any $s>0$ and all $N \in \mathbb{N}$ sufficiently large
$ \left| F_N(s) - 2s \right| \leq c \log{s}.$

\section{Proof for the van der Corput sequence}
\subsection{An Ordering Lemma}
We begin by showing a fundamental fact: the van der Corput sequence in base 2 is `more dense' on the left-side of the interval than anywhere else in the interval. We start with a motivating example: set $x_0 = 0$ and consider the first $660$ elements of the van der Corput sequence ordered by size 
$$ 0 < \frac{1}{1024} < \frac{1}{512} < \frac{1}{256} < \frac{5}{1024} < \frac{3}{512} < \frac{1}{128} < \dots < \frac{511}{512}.$$
\begin{center}
\begin{figure}[h!]
    \includegraphics[width=0.6\textwidth]{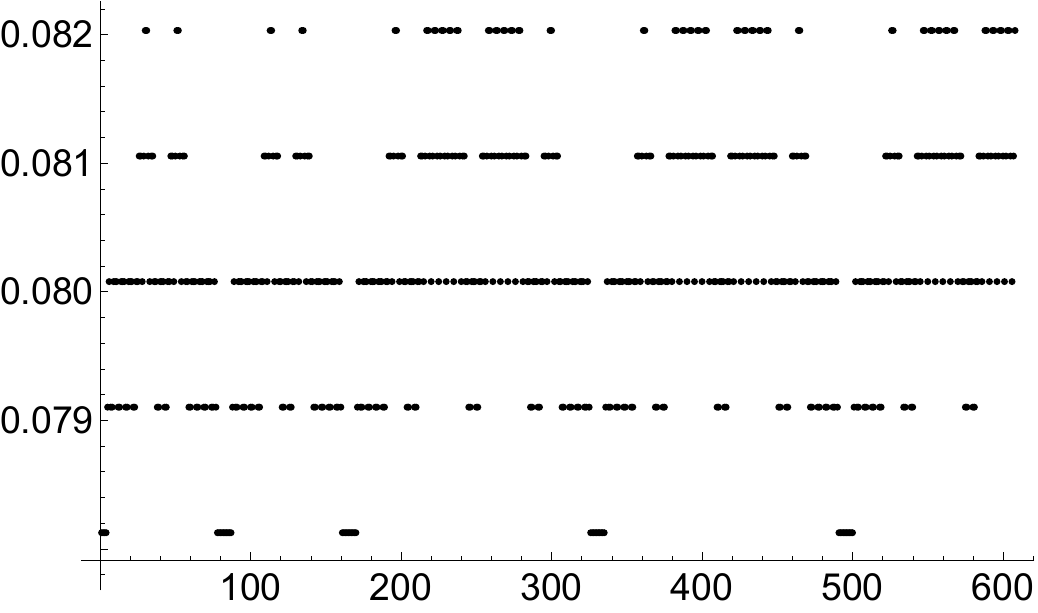}
\caption{Setting $x_0 = 0$ and taking the first 660 elements of the van der Corput sequence, the plot shows $x_{i+53} - x_i$. }
    \end{figure}
\end{center}
We consider, chosen more or less randomly, the sequence $x_{i+53} - x_i$, the result is shown in Fig. 2. As it turns out, we indeed have
$$ x_{i+53} - x_i \geq x_{53} = \frac{5}{64} = 0.078125$$
but the actual behavior of $x_{i+53} - x_i$ is somewhat intricate and equality is attained several times, not only for $i=0$.  However, equality is also attained for $i=0$ which justifies the notion that `the interval at the very left contains the most points'.
We can now make this precise. Throughout this section we interpret indices in the usual cyclic manner: $x_{j}=x_{j \mod n}$ for $j>n$. Our arguments automatically include intervals $[x_i,x_{i+j}]$ such that $[x_i,x_{i+j}]=[x_i,0] \cup [0,x_{i+j}]$, they do not need to be treated as a separate case. The main ingredient of the proof is the following statement that says that `the densest region' containing the most elements of the van der Corput sequence is always the left interval while `the emptiest region' is always given by the right end of the interval. It will be convenient to set $x_0 = 0$.
\begin{lemma}[Main Lemma]\label{prop:left} Suppose $0= x_0 < x_1 < \dots < x_{n}<1$ are the first $n$ elements of the van der Corput sequence in $[0,1]$ sorted in increasing order. Then for any $i \in \{0,\ldots,i\}$ and any $r \in \mathbb{N}$,
$$x_r-0\leq x_{i+r}-x_i \leq 1 - x_{n+1-r}.$$
\end{lemma}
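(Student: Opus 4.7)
My plan is to reformulate the first inequality as a sliding-window counting inequality on a $0/1$ sequence, which will then be proved by an induction on dyadic scale. Observe that $x_r \leq x_{i+r} - x_i$ is equivalent to the statement that every cyclic half-open arc $[x_i, x_i + x_r)$ contains at most $r$ points of $V := \{x_0, x_1, \ldots, x_n\}$, because $r+1$ consecutive points starting from $x_i$ fitting in an arc of length $\leq x_r$ would immediately give $x_{i+r} - x_i < x_r$. So the first inequality says that the leftmost arc $[0, x_r)$ maximizes the $V$-count among all arcs of length $x_r$.

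Pick $m$ with $2^m \le n+1 \le 2^{m+1}$ and set $k = n + 1 - 2^m$. Every element of $V$ is a multiple of $1/2^{m+1}$, so one can identify $V$ with a subset $T \subseteq \{0, 1, \ldots, 2^{m+1} - 1\}$ via $v \mapsto 2^{m+1} v$, and write $\chi$ for the indicator of $T$. Using the bit-reversal identities $\mathrm{br}_{m+1}(2\ell') = \mathrm{br}_m(\ell')$ and $\mathrm{br}_{m+1}(2\ell'+1) = 2^m + \mathrm{br}_m(\ell')$, one checks that $\chi$ coincides with the bit-reversal sequence $B_\ell(k')[j] := [\mathrm{br}_\ell(j) < k']$ at $\ell = m+1$ and $k' = n + 1$: every even index is in $T$ (the full grid of resolution $1/2^m$ is present once $n \geq 2^m - 1$), while the odd index $2\ell' + 1$ is in $T$ exactly when the corresponding dyadic interval has been bisected by the van der Corput construction, i.e., when $\mathrm{br}_m(\ell') < k$. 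The first inequality thus becomes: the cyclic $\chi$-sum over any window of length $t_r := 2^{m+1} x_r$ is maximized at $a = 0$.

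The central step is the general claim that for every $\ell, k', K$, the cyclic window sum $\sum_{j \in [a, a + K) \bmod 2^\ell} B_\ell(k')[j]$ is maximized at $a = 0$. I plan to prove this by induction on $\ell$. The same bit-reversal identities imply that the even-indexed subsequence of $B_\ell(k')$ is $B_{\ell-1}(\min(k', 2^{\ell-1}))$ (constantly $1$ when $k' \geq 2^{\ell-1}$) and the odd-indexed subsequence is $B_{\ell-1}(\max(k' - 2^{\ell-1}, 0))$ (constantly $0$ when $k' \leq 2^{\ell-1}$), so that exactly one of the two halves is constant. A cyclic window of length $K$ restricts to cyclic sub-windows of lengths $\lfloor K/2 \rfloor$ and $\lceil K/2 \rceil$ in these two halves, with the parity of $a$ deciding which half gets the longer sub-window. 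Applying the inductive hypothesis to each sub-window, together with a short parity case-check, gives the inductive step.

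Setting $K = t_r$ in the claim yields the first inequality. The second inequality then follows by cyclic duality: applying the first inequality with $r$ replaced by $r' := n + 1 - r$ at position $i + r$ gives $x_{(i+r) + r'} - x_{i+r} \geq x_{n+1-r}$, and since on $\mathbb{S}^1$ the $r'$-gap at $i+r$ equals $1 - (x_{i+r} - x_i)$, rearranging gives $x_{i+r} - x_i \leq 1 - x_{n+1-r}$. I expect the main obstacle to be the parity casework in the inductive step when $k' > 2^{\ell-1}$ and $K$ is odd: at $a = 0$ the non-constant half gets the shorter sub-window $\lfloor K/2 \rfloor$, while at odd $a$ it gets the longer $\lceil K/2 \rceil$ and could acquire one extra $1$, and one must verify that this is exactly offset by the extra constant-$1$ on the even side that $a = 0$ enjoys when $K$ is odd.
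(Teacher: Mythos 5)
Your proof is correct, and it takes a genuinely different route from the paper. The paper proceeds via an explicit matching/bijection argument: after a preliminary reduction (Lemma~\ref{lem:shift}) to windows anchored at dyadic points, it constructs (Lemma~\ref{lem:induc}, using the bit-comparison Lemma~\ref{lem:flip}) a bijection between the elements of an arbitrary interval and those of the leftmost interval of the same combinatorial length, with the invariant that the left-interval element is always split no later; this invariant is propagated across dyadic scales by an even/odd split. You instead recast the first inequality as a sliding-window maximization for the $0/1$ bit-reversal sequence $B_\ell(k')[j]=[\mathrm{br}_\ell(j)<k']$ and prove, by induction on $\ell$ via the even/odd decimation $B_\ell(k')\mapsto\bigl(B_{\ell-1}(\min(k',2^{\ell-1})),\,B_{\ell-1}(\max(k'-2^{\ell-1},0))\bigr)$, that the cyclic window sum is maximized at $a=0$ for every window length $K$ and threshold $k'$. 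The parity obstacle you flag (odd $K$, $k'>2^{\ell-1}$) is indeed the only delicate case, and your resolution is correct: the odd-$a$ window can gain at most one $1$ in the nonconstant half, exactly offset by the extra guaranteed $1$ that $a=0$ collects from the all-ones even half; the case $k'\le 2^{\ell-1}$ is immediate since the odd half is identically zero. The two arguments share the same underlying engine (binary self-similarity, even/odd decomposition, induction on scale), but yours is more quantitative (direct counting rather than an ordering-preserving bijection), dispenses with the paper's Lemmas~\ref{lem:shift} and~\ref{lem:flip} by proving the bound for all integer window starts at resolution $2^{-(m+1)}$, and obtains the upper bound by the cleaner cyclic duality $r\mapsto n+1-r$ rather than the paper's ``reverse the process of adding points'' argument. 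What the paper's bijection buys in exchange is a stronger structural statement (an explicit element-to-element matching compatible with splitting order), which is more than is needed for the lemma but could be of independent interest.
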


Before giving the proof in \S 2.2, we quickly discuss the main idea behind. We start by noting that the result is clearly true when $n=2^k-1$ since, in that case,
$$ \left\{x_0,x_1, x_2, \dots, x_{2^k-1} \right\} = \left\{ 0,\frac{1}{2^k}, \frac{2}{2^k}, \dots, \frac{2^k-1}{2^k} \right\}$$
and any interval $[x_i,x_{i+1}]$ has length $1/2^k$. When $2^k \leq n \leq 2^{k+1} - 1$, then each interval $[x_i,x_{i+1}]$ can only have two different possible lengths: it is either ``short'', meaning a length equal to $1/2^{k+1}$, or it is ``long'' and has a length of $1/2^{k}$. One way of thinking about it is as follows: for $2^k \leq n \leq 2^{k+1} - 1$ the new elements are being added exactly in the middle of intervals created by the first $2^{k} - 1$ points. Each interval is split exactly in the middle and each interval is either `split' or `unsplit'.
For the proof, we will need several minor results and some terminology. We use \emph{intervals} to refer to an interval of the form $[x_i,x_{i+j}]$. The smallest segments $[x_i,x_{i+1}]$, the gaps between two consecutive (ordered) points that form the intervals, will be referred to as \emph{elements}. We use the term `canonical interval of length $2^{-h}$' for any interval of the shape $[a/2^{h},(a+1)/2^{h}]$ for some integer $a$ and $0\leq h \leq k+1$. For a fixed $n$ and $k=\lfloor\log_2(n)\rfloor$, we use $x_i$ for the $i$-th element of the sorted van der Corput set with $n$ points, and $x'_i$ for those of the sorted van der Corput set with $2^{k}-1$ points.\\

The main idea behind the proof is the following: we first show in Lemma~\ref{lem:shift} that the minimal length of $r$ consecutive elements can only be obtained for an interval $[x_i,x_{i+r}]$ for which $x_i=a/2^k$, with $k=\lfloor \log_2(n)\rfloor$. Given such an interval $[x_i,x_{i+r}]$, there exists a smallest interval $[x'_g,x'_{g+j}]$ such that $[x_i,x_{i+r}]\subseteq[x'_g,x'_{g+j}]$. Note that $x'_g=x_i$ and $x'_{g+j}=x_{i+r}$ or $x'_{g+j}=x_{i+r}+2^{-k-1}$. Lemma~\ref{lem:induc} then matches all the elements of $[x'_g,x'_{g+j}]$ with distinct elements of $[0,x'_j]$ such that whenever an element of $[x'_g,x'_{g+j}]$ is split by an incoming point, then the corresponding element of $[0,x_j]$ has also been split previously (or they are the same). We are then ready for the proof of Lemma~\ref{prop:left}: this guarantees that any short element in $[x_i,x_{i+r}]$ can be matched with one in some interval $[0,x_t]$, with $x_t\geq x_r$. $[0,x_r]$ has as many elements as $[x_i,x_{i+r}]$ and at least as many of these are small. This proves the first inequality, the second inequality is then shown to be dual to the first.

\subsection{Proof of the Main Lemma}
\begin{lemma}\label{lem:shift}
    For any $x_t$ such that $x_t=a/2^{k+1}$ with $a$ odd, the interval $[x_t,x_{t+r}]$ is at least as long as $[x_{t-1},x_{t+r-1}]$.
\end{lemma}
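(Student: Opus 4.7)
My plan is to rewrite the desired inequality as a comparison of individual gaps and then exploit the two-length structure of the sorted van der Corput sequence when $n$ lies strictly between two powers of two. The inequality $x_{t+r} - x_t \geq x_{t+r-1} - x_{t-1}$ telescopes immediately to
$$ x_{t+r} - x_{t+r-1} \;\geq\; x_t - x_{t-1},$$
so I only need to compare a single pair of consecutive gaps.

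The key structural input, already recalled in the discussion preceding the lemma, is that for $2^k \leq n \leq 2^{k+1}-1$ the sorted set $\{x_0, x_1, \ldots, x_n\}$ consists of the full dyadic grid $\{j/2^k : 0 \leq j \leq 2^k - 1\}$ together with some subset of new midpoints of the form $(2j+1)/2^{k+1}$. Consequently every gap $x_{i+1} - x_i$ in the sorted sequence takes one of exactly two values: $2^{-(k+1)}$ (short, when the enclosing canonical interval of length $2^{-k}$ has been split) or $2^{-k}$ (long, when it has not). The hypothesis $x_t = a/2^{k+1}$ with $a$ odd forces $x_t$ to be one of the midpoints; the point $(a-1)/2^{k+1} = ((a-1)/2)/2^{k}$ belongs to the level-$k$ dyadic grid and hence is automatically present in the sequence, and no other sequence point can lie strictly between $(a-1)/2^{k+1}$ and $a/2^{k+1}$. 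Therefore $x_{t-1} = (a-1)/2^{k+1}$ and $x_t - x_{t-1} = 2^{-(k+1)}$, i.e.\ the \emph{minimum} possible gap size. Combined with the dichotomy, $x_{t+r} - x_{t+r-1} \geq 2^{-(k+1)} = x_t - x_{t-1}$, which is precisely the required inequality after rearranging.

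The only point that requires a second look is the cyclic indexing convention. If $t+r$ wraps past $n$, the relevant gap includes the segment $1 - x_n$, but $x_n$ is always either $(2^k-1)/2^k$ or $(2^{k+1}-1)/2^{k+1}$, so this wrap-around gap is again $2^{-k}$ or $2^{-(k+1)}$, and the same dichotomy, hence the same argument, applies verbatim. I do not foresee a substantive obstacle: the lemma is ultimately the geometric observation that between two powers of two the van der Corput sequence has a very rigid two-length gap pattern, and a newly-inserted midpoint is always preceded by a minimum-length gap, so shifting the window one step to the right can never shorten it.
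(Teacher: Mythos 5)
Your proof is correct and is essentially the same argument as the paper's: you telescope the difference of the two window lengths down to a comparison of a single gap on each side, observe that the hypothesis forces $x_t - x_{t-1}$ to be a ``short'' gap of length $2^{-(k+1)}$ (the minimum possible when $2^k \le n \le 2^{k+1}-1$), and conclude since every gap is at least that long. The paper states this more tersely (``The two intervals differ by one element at each endpoint\dots Regardless of the interval on the right\dots''), while you make the telescoping explicit and also note the cyclic wrap-around case, but there is no substantive difference in approach.
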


\begin{proof}
    The two intervals differ by one element at each endpoint. $[x_{t-1},x_t]$ is a ``short'' interval as $x_t$ is one of the points added between $2^k$ and $2^{k+1}$ points: $[x_{t-1},x_{t+1}]$ was an element of length $2^{-k}$ that was then split in two halves by the addition of $x_t$. Regardless of the interval on the right, $[x_{t-1},x_{t-1+r}]$ cannot be longer than $[x_t,x_{t+r}]$.
\end{proof}

Suppose we are given a uniform partition induced by a number of the form $n= 2^{k}-1$. We want to show that, as we keep adding points, whenever a point is being added somewhere, we can identify this with another point that is being added earlier on the left side. This is, unsurprisingly, a consequence of the binary expansion of integers. We start with the following purely technical result.

\begin{lemma}\label{lem:flip}
Suppose we have placed $2^k-1$ points. Let $[x'_l,x'_{l+1}]$ and $[x'_m,x'_{m+1}]$ be two elements such that $x'_l$ is written $A0C$ and $x'_m$ $B1C$ in inverse binary decomposition, where $A$ and $B$ are some binary prefixes of identical length, and $C$ is some binary suffix. Then $[x'_l,x'_{l+1}]$ is split before $[x'_m,x'_{m+1}]$ when going to $2^{k+1}-1$ points.
\end{lemma}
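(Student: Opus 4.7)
The plan is to compare, for each element, the van der Corput index of the unique point that splits it, thereby reducing the lemma to a direct comparison of two binary integers.

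I would first observe that after the first $2^k-1$ points together with $x'_0 = 0$ the sorted set is exactly $\{j/2^k : 0 \le j \le 2^k-1\}$, so $x'_l = l/2^k$ and every element has length $2^{-k}$. When we pass from $2^k-1$ to $2^{k+1}-1$ points we add $\phi_2(2^k), \phi_2(2^k+1), \dots, \phi_2(2^{k+1}-1)$ in order of increasing index, and the unique new point that lands in $[x'_l, x'_{l+1}]$ is its midpoint $(2l+1)/2^{k+1}$. Thus ``$[x'_l,x'_{l+1}]$ is split before $[x'_m,x'_{m+1}]$'' is equivalent to the van der Corput index of $(2l+1)/2^{k+1}$ being strictly smaller than that of $(2m+1)/2^{k+1}$.

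Next I would invoke the defining bit-reversal property of the van der Corput map: a dyadic rational with binary expansion $0.d_1 d_2 \cdots d_{k+1}$ equals $\phi_2(n)$ where $n$ has $(k+1)$-bit binary expansion $d_{k+1} d_k \cdots d_1$. Reading the inverse binary decomposition of $x'_l$ as the bits $b_1 b_2 \cdots b_k$ appearing in $x'_l = 0.b_1 b_2 \cdots b_k$, the hypothesis $x'_l = A0C$ says $l$ itself has $k$-bit binary $A0C$. Then $(2l+1)/2^{k+1}$ has binary fraction $0.A0C1$, so its van der Corput index is the integer with $(k+1)$-bit binary $1\,C^R\,0\,A^R$, where $A^R, C^R$ denote $A, C$ read in reverse. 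The identical computation for $x'_m$ yields a midpoint whose van der Corput index has binary $1\,C^R\,1\,B^R$.

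The proof then concludes with a one-line comparison: these two $(k+1)$-bit binary integers share the top $|C|+1$ bits (the prefix $1\,C^R$), then differ in the next position ($0$ versus $1$), and end with $A^R$ or $B^R$, each of length $p := |A| = |B|$. Since $\mathrm{val}(A^R), \mathrm{val}(B^R) \in [0, 2^p - 1]$, the higher-order bit difference dominates:
\[
    \mathrm{val}(1\,C^R\,1\,B^R) - \mathrm{val}(1\,C^R\,0\,A^R) = 2^p + \mathrm{val}(B^R) - \mathrm{val}(A^R) \ge 2^p - (2^p-1) = 1.
\]
Hence the midpoint splitting $[x'_l,x'_{l+1}]$ has strictly smaller van der Corput index and is added first, so $[x'_l,x'_{l+1}]$ is split before $[x'_m,x'_{m+1}]$. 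I expect the only real obstacle to be bookkeeping: one has to track two layers of bit-reversal (the inverse binary labels $l$; the van der Corput index of the splitter involves the reflection of a $(k+1)$-bit binary fraction), but once the convention is fixed the comparison is immediate.
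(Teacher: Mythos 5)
Your proof is correct and follows essentially the same argument as the paper: identify the unique midpoint that splits each element, compute the van der Corput generation index of that midpoint via bit reversal (obtaining $1\,C^R\,0\,A^R$ versus $1\,C^R\,1\,B^R$), and compare. The paper states the final comparison without elaboration ("regardless of $A$ and $B$, the first integer is smaller"); your explicit estimate $2^p + \mathrm{val}(B^R) - \mathrm{val}(A^R) \geq 1$ just supplies that detail.
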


\begin{proof}
    $[x'_l,x'_{l+1}]$ will be split when we obtain the integer whose inverse binary decomposition is $A0C1$ (of total length $k+1$), therefore in binary that integer is $1r(C)0r(A)$, where $r(A)$ is the reversed binary sequence $A$. Similarly, $[x'_m,x'_{m+1}]$ will be split by the integer whose binary decomposition is $1r(C)1r(B)$. Regardless of $A$ and $B$, the first integer is smaller than the second, and therefore $[x'_l,x'_{l+1}]$ is split first.
\end{proof}

We can now formulate the most important technical statement that induces a bijection between the quantities of interest.

\begin{lemma}\label{lem:induc}
Suppose $0= x'_0\leq x'_1 \ldots\leq x'_{2^t-1}<1$ are the first $2^t-1$ points of the van der Corput sequence in $[0,1]$, sorted in increasing order.
    Let $[x'_g,x'_{g+j}]$ be some interval of length $0\leq j\leq r$ and $[0,x'_j]$ the corresponding initial interval of equal length. 
    Then there exists a bijection from the elements of $[x'_g,x'_{g+j}]$ to those of $[0,x_j']$, such that for any $[x'_s,x'_{s+1}]$ in $[x_g',x'_{g+j}]$ the associated element of $[0,x'_j]$ is cut before $[x'_s,x'_{s+1}]$ as we go to $2^{t-1}-1$ points.
\end{lemma}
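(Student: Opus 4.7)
The plan is to replace the bijection claim with an equivalent ``threshold'' statement and prove it by induction on $t$. First I would invoke Hall's marriage theorem (or, equivalently, sort both multisets of splitting times and match them rank-to-rank) to reduce the desired bijection to the following cumulative inequality: for every time $T \in [2^t, 2^{t+1})$, the prefix $[0, x'_j]$ contains at least as many already-split elements as the window $[x'_g, x'_{g+j}]$. So the entire lemma reduces to showing that among all windows of $j$ consecutive gaps of the uniform grid $\{0, 1/2^t, \dots, (2^t-1)/2^t\}$, the leftmost one maximizes the number of splits at every intermediate step.

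Next I would record the key structural fact, a two-phase recursion already visible in Lemma~\ref{lem:flip} applied at the least significant bit. Writing any index $l \in \{0, \ldots, 2^t - 1\}$ as $l = 2m + \epsilon$ with $\epsilon \in \{0,1\}$, the element $[x'_l, x'_{l+1}]$ is split at time $T_l = 2^t + \epsilon \cdot 2^{t-1} + \operatorname{rev}_{t-1}(m)$, where $\operatorname{rev}_{t-1}$ reverses a $(t-1)$-bit binary representation. Consequently all even-indexed elements are split during the time interval $[2^t, 2^t + 2^{t-1})$ and all odd-indexed elements during $[2^t + 2^{t-1}, 2^{t+1})$, and within each parity class the splitting order coincides, under $l \mapsto \lfloor l/2 \rfloor$, with the $(t-1)$-level splitting order on a grid of half the size.

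With this structure in hand, I would induct on $t$ (the case $t=1$ is a direct check). Fixing $T$ and letting $R = T - 2^t$, I would split both the prefix $[0, j-1]$ and the window $[g, g+j-1]$ into their even and odd sub-indices. If $g$ is even, both sets contain $\lceil j/2 \rceil$ evens and $\lfloor j/2 \rfloor$ odds, which correspond (under $l \mapsto \lfloor l/2 \rfloor$) to equal-length prefixes and windows in the $(t-1)$-level grid, and the inductive hypothesis applied separately in each parity class gives the comparison. If $g$ is odd the window carries one more odd index and one fewer even index than the prefix: when $T < 2^t + 2^{t-1}$ only evens are split, so monotonicity in window length plus the inductive hypothesis suffice; when $T \geq 2^t + 2^{t-1}$ the prefix already gains a $+1$ lead from its extra even split, and the $+1$ deficit on the odd side is absorbed by the inductive hypothesis together with the trivial bound that enlarging a window by one adds at most one to its split count.

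The main obstacle will be precisely this last odd-$g$, odd-$j$ case, where no direct ``even-to-even, odd-to-odd'' matching is available because the two sets have genuinely different parity profiles. One is forced to argue at the level of cumulative counts and show that the single extra early (even) split guaranteed to the prefix exactly offsets the single extra late (odd) split that the window may pick up. The two-phase recursion makes this a one-line consequence of the inductive hypothesis, but it is also the step where the detour through Hall's theorem really pays off: a direct explicit bijection would require an awkward cross-parity match that the threshold reformulation bypasses cleanly.
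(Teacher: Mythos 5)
Your proposal is correct and, while it shares the skeleton of the paper's argument (induction on $t$, partitioning elements by the parity of the last bit of their left endpoint, the observation that even-indexed gaps are split during $[2^t, 2^t+2^{t-1})$ and odd-indexed gaps during $[2^t+2^{t-1}, 2^{t+1})$ with a $\operatorname{rev}$-recursion within each phase), it takes a genuinely cleaner route at the one delicate step. The paper constructs an explicit bijection and, when the parity profiles of prefix and window differ, has to match one odd window-element to the ``external'' element $[x'_j,x'_{j+1}]$ and then transfer it to $[x'_{j-1},x'_j]$ by invoking Lemma~\ref{lem:flip}. You instead replace the bijection by the equivalent cumulative inequality (sort splitting times on both sides and compare rank-by-rank; this is what ``there is an everywhere-earlier bijection'' means) and then finish the mismatched-parity case with the $+1$ lead from the extra even versus a $+1$ lag from the extra odd, absorbed by the inductive hypothesis and the trivial monotonicity bound. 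This sidesteps the separate flip lemma entirely (the two-phase structure already encodes it at the least significant bit) and, as you say, avoids an awkward cross-parity match. Two small points you should tighten when writing it up. First, the line ``if $g$ is odd the window carries one more odd index and one fewer even index'' is only true when $j$ is also odd; when $g$ is odd and $j$ even the parity counts agree and the argument is the easy equal-count case, so the real dichotomy is ``parity counts equal'' versus ``$g$ odd and $j$ odd'' (which you do single out as the crux, but the earlier sentence misstates the split). Second, you should explicitly record that the even- and odd-parity index sets of a contiguous window $[g, g+j-1]$, mapped under $l \mapsto \lfloor l/2\rfloor$, are themselves contiguous intervals at level $t-1$ (and that the prefix maps to a prefix); this is a one-line check but it is exactly what licenses applying the inductive hypothesis inside each parity class.
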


\begin{proof}[Proof of Lemma~\ref{lem:induc}]
We show the property by induction on $t$.
If $t=0$, we only have one possible interval: $[0,1]$, the proposition is trivially true.
Suppose the Lemma is true for $t-1$. Then this means that given the first $2^{t-1}$ points of the van der Corput sequence in increasing order, for any interval $[x'_{g},x'_{g+j}]$, we can pair each element with an element of $[0,x'_j]$ that will be cut before it as we add points until having $2^t-1$.
Let $0=x'_0< x'_1< \ldots< x'_{2^t-1}<1 $ and let $[x'_g,x'_{g+j}]$ be an interval of length $j$. Notice that the $2^t$ points that get added afterwards until we get $2^{t+1}-1$ points are a copy of the first $2^t$ (including 0), shifted to the right by $1/2^{t+1}$. Furthermore, the first $2^{t-1}$ are a copy of the first $2^{t-1}$ shifted by $1/2^{t+1}$, and the next $2^{t-1}$ a copy of the first $2^{t-1}$ shifted by $3/2^{t+1}$. 
For any given $h$, canonical intervals can be split in two categories: \emph{even} and \emph{odd} intervals. The even intervals correspond to the $[a/2^h,(a+1)/2^h]$ for which $a$ is even, and the odd intervals to the case where $a$ is odd. All the elements of $[0,1]$ for $2^t-1$ points correspond exactly to canonical intervals of length $2^{-t}$. We can separate the elements $[x'_s,x'_{s+1}]$ of $[x'_g,x'_{g+j}]$ and $[0,x'_j]$ between odd and even elements based on the $t$-th digit (the last one) of the binary decomposition of their left endpoint $x'_s$. Note that $[0,x'_j]$ always has at least as many even elements as there are odd elements as it starts with an even element. 

If we consider all the even elements of length $1/2^{t}$ of $[0,1]$, we can multiply their lengths by 2 to obtain exactly the exactly the element decomposition of $[0,1]$ that we had for $2^{t-1}$ points. We can do the same process with the odd elements, except they also need to be shifted by an extra $1/2^{t}$ to the left. In both cases, the ordering of the elements stays the same, but we can now apply the induction hypothesis on the even and odd intervals respectively.
There are now two cases to consider:
\begin{enumerate}
    \item If $[x'_g,x'_{g+j}]$ and $[0,x'_j]$ have the same number of even and odd elements, then we can directly use the induction hypothesis to match the even intervals of $[x'_g,x'_{g+j}]$ with those of $[0,x'_j]$. The odd elements behave exactly the same way except they are offset by $1/2^{t}$: we can apply again the induction hypothesis and match the odd elements of $[x'_g,x'_{g+j}]$ with those of $[0,x'_j]$. We have the desired assignment.
    \item If they have different numbers of odd and even, then $[0,x'_j]$ must have one extra even element and one less odd element compared to $[x'_g,x'_{g+j}]$. Using a similar reasoning with two induction hypotheses as in the previous case, there is a matching between the even intervals of $[0,x'_{j-2}]$ and all those of $[x'_g,x'_{g+j}]$. There is also a matching between the odd intervals of $[x'_g,x'_{g+j}]$ and those of $[0,x'_j]\cup [x'_j,x'_{j+1}]$ (this extra interval has to be odd since the last interval of $[0,x'_j]$ is even). For the odd interval of $[x'_g,x'_{g+j}]$ that is matched with $[x'_j,x'_{j+1}]$, assign it instead to the last even interval $[x'_{j-1},x'_{j}]$. This is a valid assignment by Lemma~\ref{lem:flip} as the left endpoints of $[x'_{j-1},x'_{j}]$ and $[x'_j,x'_{j+1}]$ can be written $A0$ and $A1$ in inverse binary decomposition respectively using $t$ bits after the `0.'. Indeed, they are consecutive canonical intervals of length $2^{-t}$, with the first being even. We have the desired assignment, this concludes the proof.
\end{enumerate}
\end{proof}

The statement is true for $t=k=\lfloor \log_2(n) \rfloor$. We now prove the Main Lemma.

\begin{proof}[Proof of the Main Lemma]
Let $0=x_0<x_1<\ldots<x_{n}<1$ be the sorted first $n$ points of the van der Corput sequence and $k=\lfloor \log_2(n)\rfloor$. Let $[x_i,x_{i+r}]$ be an interval of $r$ elements. By Lemma~\ref{lem:shift}, we can suppose $x_i=a/2^k$ with $a \in \mathbb{N}$ when searching for the interval $[x_i,x_{i+r}]$ with minimal length. Given $0=x'_0<x'_1<\ldots<x_{2^k-1}'<1$, the sorted first $2^k-1$ points of the van der Corput sequence, Lemma~\ref{lem:induc} states that for $[x_g',x'_{g+j}]$, the smallest interval containing $[x_i,x_{i+r}]$, each element $[x'_s,x'_{s+1}]$ can be matched with an element of $[0,x'_j]$ which will be split before it. Remember that $x'_g=x_i$, and $x'_{g+j}=x_{i+r}$ or $x'_{g+j}=x_{i+r}+2^{-k-1}$.

When going from $2^k$ points to $n<2^{k+1}$, we know by Lemma~\ref{lem:induc} that the number of short elements of length $2^{-k-1}$ in $[0,x'_j]$ is always \emph{at least} the number of short elements in $[x'_g,x'_{g+j}]$. When reaching $n$ points, there are two options:
\begin{enumerate}
    \item $[x_i,x_{i+r}]=[x'_g,x'_{g+j}]$. $[0,x'_j]$ has at least as many short elements as $[x_i,x_{i+r}]$, therefore $x_j'\leq x_{i+r}-x_i$. We also have that any long element in $[0,x'_j]$ corresponds to a long interval in $[x_i,x_{i+r}]$ by Lemma~\ref{lem:induc}, therefore there are at least $r$ elements in $[0,x'_j]$: $x_r\leq x'_j$.
    \item If $[x_i,x_{i+r}]=[x'_g,x'_{g+j}-2^{-k-1}]$. We have removed a single small element from $[x'_g,x'_{g+j}]$. The same arguments as in the previous case hold: $[0,x'_{j}]$ has at least as many short elements as $[x'_{g},x_{g+j}']$, and at most as many long elements. $[x_i,x_{i+r}]$ therefore contains at least as many long elements and at most one less short interval than $[0,x'_j]$. $[0,x_j']$ contains at least as many elements as $[x'_g,x'_{g+j}]$, which contains one more element than $[x_i,x_{i+r}]$. Since $[x_i,x_{i+r}]$ contains $r$ elements, $[0,x_j']$ has to contain at least $r+1$, and we have $x'_j-2^{-k-1}\leq x_{i+r}-x_i$. $[0,x_r] \subsetneq [0,x'_j]$: removing the last element of $[0,x'_j]$ removes at least a short interval of length $2^{-k-1}$: $x_r\leq x_j'-2^{-k-1}$.
\end{enumerate}
In both cases, we obtain $x_r\leq x_{i+r}-x_i$, which shows that the left block of $r$ elements, $[0,x_r]$, is the shortest contiguous interval of $r$ elements. We now show that the right block of $r$ elements, $[x_{n+1-r},1]$, is the longest. In other words, if 
$$ 0 = x_0 < x_1 < \dots < x_{n-1} < 1$$
are the first $n$ elements of the van der Corput sequence, then
$$  x_{i+r} - x_i \leq 1 - x_{n+1-r}.$$

The inequality is obtained by reversing the process of adding points. Rather than adding integers $n$ from $2^k$ to $2^{k+1}$, we add them in decreasing order from $2^{k+1}$ to $2^k$. This is exactly the same process as before, except that the roles of $0$ and $1$ in the binary decompositions have been switched. 
One can then apply exactly the same reasoning as before to show that the interval with $r$ elements starting in $1\ldots1$ (in reverse binary decomposition) and going leftwards will always be shorter than any other interval of length $r$. This means that for the regular process of adding points in, for any $[x_i,x_{i+r}]$ and $[x_{n+1-r},1]$, we can match the elements of $[x_i,x_{i+r}]$ to those of $[x_{n+1-r},1]$ such that the element of $[x_i,x_{i+r}]$ is always cut first. This implies that $1-x_{n+1-r}\geq x_{i+r}-x_i$ and concludes the proof of the Main Lemma.
\end{proof}

\subsection{Proof of the Theorem for van der Corput}
We are now ready to prove Theorem 3 for the van der Corput sequence. We quickly comment at the end of \S 2 how Theorem 3 implies Theorem 2.
\begin{proof}
Fix a value of $r \in \mathbb{N}$ and let $n$ be sufficiently large.
Suppose we are dealing with the first $n$ elements of the van der Corput sequence and $2^k-1 \leq n < 2^{k+1}-1$. When $n = 2^k-1$, then the points are perfectly equispaced at distance $2^{-k}$. In particular, the individual gaps between consecutive elements are all the same length and the statement is trivially true.
  It remains to deal with the case 
$2^k-1 < n < 2^{k+1}-1$.
The Main Lemma ensures that it is sufficient to control the number of elements in the densest interval (on the left) and the sparsest interval (on the right). We start on the left and will control the number of points in $[0,x]$. Since $r$ is fixed and $n$ is very large, it suffices to deal with $x = \ell/2^k$ for some fixed $\ell \in \mathbb{N}$ (which we may assume to satisfy $r/10 \leq \ell \leq 10r$) and $n$ sufficiently large. When $n = 2^k-1$, the interval $[0, \ell/2^k]$ contains exactly $\ell$ nonzero elements of the van der Corput sequence. Now we add the remaining $n - (2^k -1)$ points. We define $a \in \mathbb{N}$ as the unique integer such that
$$ 2^a \leq \ell < 2^{a+1}.$$
The elements of the van der Corput sequence in base 2 that end up in the interval $[0, \ell/2^k)$ also end up in the interval $[0, 2^{a+1}/2^k)$ because 
$[0, \ell/2^k) \subset [0, 2^{a+1}/2^k)$. The elements of the van der Corput sequence $(x_m)_{m=1}^{\infty}$ are contained in $[0, 2^{a+1}/2^k]$ whenever $m$ is a multiple of $2^{k-a -1}$. This means that the elements $x_m$ of the van der Corput sequence contained in $[0, 2^{a+1}/2^k]$ for $2^k \leq m \leq n$ are exactly
$$ x_{2^k}, \quad x_{2^k + 2^{k-a-1}}, \quad x_{2^k + 2 \cdot 2^{k-a-1}}, \quad x_{2^k + 3\cdot 2^{k-a-1}}, \cdots$$
This means that we have to count
$$ X = \# \left\{ m \in \mathbb{N}_{\geq 0}: 2^k + m \cdot 2^{k-a-1} \leq n \quad \mbox{and} \quad x_{2^k + m \cdot 2^{k-a-1}} \leq \frac{\ell}{2^k} \right\}.$$
The first inequality is easy to resolve and the problem can be rewritten as
$$X =  \# \left\{ 0 \leq m \leq \frac{n-2^k}{2^{k-a-1}}: \quad x_{2^k + m \cdot 2^{k-a-1}} \leq \frac{\ell}{2^k} \right\}.$$
At this point, we invoke the self-similar structure of the van der Corput sequence: an expansion in binary shows that for all $1 \leq m < 2^{a+1}$ we have
$$ \frac{2^k}{2^{a+1}} \cdot x_{2^k + m \cdot 2^{k-a-1}} = x_m + \frac{1}{2^{a+2}}.$$
Therefore, using this self-similarity, the set in question can be written as
\begin{align*}
    X &= \# \left\{ 0 \leq m \leq \frac{n-2^k}{2^{k-a-1}}: \quad \frac{2^k}{2^{a+1}} \cdot x_{2^k + m \cdot 2^{k-a-1}} \leq \frac{\ell}{2^{a+1}} \right\} \\
    &= \# \left\{ 0 \leq m \leq \frac{n-2^k}{2^{k-a-1}}: \quad x_m \leq \frac{\ell - 1/2}{2^{a+1}}  \right\}.
\end{align*}
At this point, we can invoke the standard-discrepancy estimate for the van der Corput sequence ensuring that, for all $A \in \mathbb{N}$ and $ 0 < B < 1$ we have
$$ \# \left\{1 \leq m \leq A: x_m \leq B \right\} = A \cdot B + \mathcal{O}(\log B).$$
This shows that
\begin{align*}
    X &=  \frac{n-2^k}{2^{k-a-1}} \cdot \frac{\ell - 1/2}{2^{a+1}} + \mathcal{O}\left(\log\left(  \frac{n-2^k}{2^{k-a-1}}\right) \right).
\end{align*}
We recall that
$$  \frac{n-2^k}{2^{k-a-1}} \leq \frac{2^{k}}{2^{k-a-1}} \leq 2^{a+1} \leq 10 \ell. $$
This shows that the total number of elements in $[0, \ell/2^k]$ is
\begin{align*}
    \# \left\{1 \leq m \leq n: x_m \leq \frac{\ell}{2^k} \right\} &= \# \left\{1 \leq m \leq 2^k-1: x_m \leq \frac{\ell}{2^k} \right\} \\
    &+\left\{2^k -1 \leq m \leq n: x_m \leq \frac{\ell}{2^k} \right\} \\
    &= \ell +  \frac{n-2^k}{2^{k}} (\ell - 1/2) + \mathcal{O}(\log(10\ell))
\end{align*}
which simplifies to $n \cdot \ell + \mathcal{O}(\log(10\ell))$ as desired. This concludes the argument for intervals of the length $[0, \ell/2^k]$. The argument on the other side, intervals of length $[1 - \ell/2^k, 1]$ is completely analogous: in fact, if an element $x_m$ is in that interval, then $x_{m+1}$ is in $[0, \ell/2^k]$ and the previous argument applies.
\end{proof}

Theorem 3 implies Theorem 2 in the following manner: let us assume the elements are sorted and the shortest interval is length $x_{k+r} - x_k$. Then the interval $[x_k, x_{k+r}]$ contains $r+1$ points. Theorem 2 implies that the length of the interval cannot be shorter than $r - c \log{r}$.  Likewise, if $x_{k+r} - x_k$ was a very long interval, then Theorem 2 implies that it cannot be longer than $r + c\log{r}$. This implies Theorem 2.

\section{Proof for the Golden Ratio Kronecker sequence}

\subsection{A Two Gap Lemma} We use $F_1 = F_2 = 1$ and $F_n = F_{n-1} + F_{n-2}$ for $n\geq 3$ to denote the Fibonacci numbers.  It is well understood that for any Kronecker sequence, there are at most 3 different gaps between two consecutive elements (also known as the Three Gap Theorem \cite{sos, sur, swi}). It is also well understood that this can be interpreted from the point of view of the continued fraction expansion \cite{ravv}. We first show that our desired statement is true for a suitable subsequence.

\begin{lemma}[Folklore] \label{lem:twogap}
    The set of $F_k$ points
    $$X = \left\{0, \left\{ \phi \right\},\left\{ 2\phi \right\}, \left\{ 3\phi \right\}, \dots, \left\{ (F_k-1) \phi \right\} \right\} \subset [0,1]$$
    satisfies, for all intervals $[x, x+\varepsilon] \subset \mathbb{S}^1$,
    $$ \# \left( X \cap \left[x, x + \varepsilon \right]\right) = F_k \cdot \varepsilon + \mathcal{O}(1).$$
\end{lemma}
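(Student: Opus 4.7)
The plan is to exploit the fact that the first $F_k$ points of the golden Kronecker sequence form a small perturbation of the $F_k$-equispaced lattice $\{j/F_k : 0 \leq j < F_k\}$, with each perturbation bounded by $\phi^{-k}$ and hence dominated by the lattice spacing $1/F_k$. Once this is in hand, counting points of $X$ in any interval reduces to counting lattice points, up to an $O(1)$ correction coming from lattice points near the endpoints whose perturbation could push them across.

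First I would invoke the classical identity
$$F_k \phi - F_{k+1} = \frac{(-1)^{k+1}}{\phi^k},$$
an immediate consequence of Binet's formula. Rewriting it as $\phi = F_{k+1}/F_k + (-1)^{k+1}/(F_k \phi^k)$ and multiplying by $n$, for each $0 \leq n < F_k$ we obtain
$$\left\{ n \phi \right\} = \frac{M(n)}{F_k} + \epsilon_n \pmod{1}, \qquad |\epsilon_n| \leq \phi^{-k},$$
where $M(n) = n F_{k+1} \bmod F_k$. Because $\gcd(F_k, F_{k+1}) = 1$, the map $n \mapsto M(n)$ is a bijection of $\{0, 1, \dots, F_k - 1\}$; thus $X$ is exactly the equispaced lattice $\{j/F_k : 0 \leq j < F_k\}$ with each lattice point displaced by some $\delta_j$ satisfying $|\delta_j| \leq \phi^{-k}$.

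Now fix any interval $[x, x+\varepsilon] \subset \mathbb{S}^1$. A displaced lattice point $j/F_k + \delta_j$ lies in $[x, x+\varepsilon]$ only if $j/F_k \in [x - \phi^{-k}, x + \varepsilon + \phi^{-k}]$; conversely, any lattice point $j/F_k$ inside $[x + \phi^{-k}, x + \varepsilon - \phi^{-k}]$ is automatically displaced into $[x, x+\varepsilon]$. Since the equispaced lattice contains $F_k \varepsilon + O(F_k \phi^{-k}) + O(1) = F_k \varepsilon + O(1)$ points in either of these intervals (using $F_k \phi^{-k} \leq 1$, valid for all $k \geq 1$), we conclude
$$\# \bigl( X \cap [x, x+\varepsilon] \bigr) = F_k \varepsilon + O(1),$$
uniformly in $k$ and in the choice of interval.

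No real obstacle arises: the entire argument rests on the super-exponentially fast convergence $F_{k+1}/F_k \to \phi$ captured by the identity above, which forces the $F_k$ points to snap very close to the equispaced lattice. The only mild care needed is verifying that the boundary error is absolute rather than growing with $k$, which follows from the bound $F_k \phi^{-k} \leq 1$. A more hands-on alternative would combine the Three Gap Theorem with the Sturmian (balanced) structure of the gap pattern at Fibonacci times, but the perturbation argument above is noticeably shorter and bypasses that machinery entirely.
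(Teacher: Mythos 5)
Your proof is correct and takes essentially the same approach as the paper's: both reduce to the observation that the first $F_k$ Kronecker points are a perturbation of the equispaced lattice $\{j/F_k\}$ with displacement $O(1/F_k)$, via the good rational approximation $F_{k+1}/F_k \approx \phi$ and coprimality of $F_k, F_{k+1}$. Your version is slightly more careful (exact Binet identity instead of the inequality $|F_{k+1}/F_k - \phi| \le 1/F_k^2$, and an explicit sandwich at the interval boundary), but the underlying idea is identical.
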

\begin{proof} We use 
$$ \left| \frac{F_{k+1}}{F_k} - \phi \right| \leq \frac{1}{F_k^2}$$
 to write
\begin{align*}
     \left\{m \phi \right\} =\left\{m \left( \frac{F_{k+1}}{F_k} + \mathcal{O}(F_k^{-2}) \right) \right\}  = \left\{  \frac{F_{k+1} m}{F_k}  \right\}  + \mathcal{O}(F_k^{-1}).
\end{align*}
$F_k$ and $F_{k+1}$ are coprime and thus
$$ \left\{   \frac{F_{k+1} m}{F_k}: 0 \leq m \leq F_k - 1 \right\} = \left\{0, \frac{1}{F_k}, \dots, \frac{F_k-1}{F_k} \right\}.$$
Since the error is uniformly $\mathcal{O}(F_k^{-1})$, the result follows.
\end{proof}

\subsection{Proof of the Theorem for Golden Ratio Kronecker}
\begin{proof}
    
Consider the first $n$ elements of the sequence. We pick an element $x_i = \left\{a \phi \right\}$ and ask ourselves about how many of the first $n$ points are contained in $[x_i, x_i + r/n]$ where $r \in \mathbb{N}$ is a fixed integer and $n$ is assumed to be sufficiently large (depending on $r$). We work on $\mathbb{S}^1$ and understand everything cyclically.

\begin{center}
\begin{figure}[h!]
    \begin{tikzpicture}
        \draw [thick] (-1,0) -- (5,0);
          \draw [dashed, thick] (5,0) -- (6,0);
                    \draw [dashed, thick] (-2,0) -- (-1,0);    
      \filldraw (0,0) circle (0.06cm);  
       \filldraw (1,0) circle (0.06cm);  
       \filldraw (2,0) circle (0.06cm);  
          \filldraw (3,0) circle (0.06cm);  
      \filldraw (4,0) circle (0.06cm);   
      \node at (0, -0.3) {$\left\{a \phi\right\}$};
       \node at (4, -0.3) {$\left\{b \phi\right\}$};
       \draw [thick, <->] (0, -0.6) -- (4, -0.6);
       \node at (2, -0.9) {$r/n$};
    \end{tikzpicture}
\end{figure}  
\end{center}
\vspace{-20pt}

The goal is to understand, for an interval $J \subset \mathbb{S}^1$ of length $r/n$, the number of points in $J$. We may assume w.l.o.g. that the left end-point of $J$ is given by one of the points $x_i$ since that changes the total number of points in the interval at most by 1. The problem is then to analyze
$$ X = \left\{1 \leq m \leq n: x_i \leq \left\{ m \phi \right\} \leq x_i + \frac{r}{n} \right\}.$$
Using that $x_i = \left\{a \phi \right\}$, we can shift the problem and write
$$ X = \left\{1-a \leq m \leq n-a: 0 \leq \left\{ m \phi \right\} \leq  \frac{r}{n} \right\}.$$
This shows that the desired estimate for $\# X$ would follow from having, for $0 \leq A,B \leq n$ the uniform estimates
$$ Y= \# \left\{ 0 \leq m \leq A: 0 \leq \left\{m \phi\right\} \leq \frac{r}{n} \right\} = A \frac{r}{n} + \mathcal{O}(\log{r})$$
and
$$ Z= \# \left\{ -B \leq m \leq 0: 0 \leq \left\{m \phi\right\} \leq \frac{r}{n} \right\} = B \frac{r}{n} + \mathcal{O}(\log{r}).$$
Since $\left\{(-m) \phi \right\} = 1 - \left\{ m \phi \right\}$, the two estimates are completely equivalent and it suffices to prove the estimate for $Y$.
Let $F_k \leq A < F_{k+1}$. Then we know from the Lemma that the first $F_k$ elements are fairly evenly distributed and 
 $$ \# \left\{ 1 \leq m \leq F_k - 1: 0 \leq  \left\{m \phi\right\} \leq \frac{r}{n} \right\} = F_k \frac{r}{n} + \mathcal{O}(1).$$
It remains to understand the behavior of $F_k \leq m \leq A$ which can be written as
\begin{align*}
     Y &= \# \left\{ F_k \leq m \leq A: 0 \leq \left\{m \phi\right\} \leq \frac{r}{n} \right\} + F_k \frac{r}{n} + \mathcal{O}(1) \\
    &= \# \left\{ 0 \leq m \leq A - F_k: 0 \leq \left\{F_k \phi\right\}  + \left\{m \phi\right\} \leq \frac{r}{n} \right\} + F_k \frac{r}{n} + \mathcal{O}(1) \\
    &= \# \left\{ 0 \leq m \leq A - F_k: -  \left\{F_k \phi\right\} \leq   \left\{m \phi\right\} \leq \frac{r}{n} -  \left\{F_k \phi\right\}\right\} + F_k \frac{r}{n} + \mathcal{O}(1).
\end{align*}
Let us now use $F_{k_2}$ to denote the largest Fibonacci number such that
$F_{k_2} - 1 \leq A - F_k$
and use the Lemma once more to argue that
$$  \# \left\{ 0 \leq m \leq F_{k_2} -1: -  \left\{F_k \phi\right\} \leq   \left\{m \phi\right\} \leq \frac{r}{n} -  \left\{F_k \phi\right\}\right\} = F_{k_2} \frac{r}{n} + \mathcal{O}(1)$$
which then reduces the problem to analyzing
$$ \# \left\{ 0 \leq m \leq A - F_k - F_{k_2}: 0 \leq   \left\{m \phi\right\} + \left\{F_k \phi\right\} + \left\{F_{k_2} \phi\right\}\leq \frac{r}{n} \right\}.$$
It is clear that this procedure can be carried out $\sim \log{A} \leq \log{n}$ times and will result in an error of size $\mathcal{O}(\log{n})$. The goal is to refine this a little bit. Suppose we greedily remove the largest Fibonacci number $\ell$ consecutive times and end up with the range $0 \leq m \leq A- F_k - F_{k_2} - \dots - F_{\ell}$. Using the notation $k_1 = k$, 
this means that, after $\ell$ steps, the remaining set $S$ is
\begin{align*}
    S &= \left\{ 0 \leq m \leq A - \sum_{j=1}^{\ell} F_{k_{j}}: 0 \leq   \left\{m \phi\right\} + \sum_{j=1}^{\ell}  \left\{F_{k_j} \phi\right\}   \leq \frac{r}{n} \right\} \\
    &\subseteq  \left\{ 0 \leq m \leq F_{k_{\ell}}-1: 0 \leq   \left\{m \phi\right\} + \sum_{j=1}^{\ell}  \left\{F_{k_j} \phi\right\}   \leq \frac{r}{n} \right\}.
\end{align*}
At this point, we can invoke Lemma \ref{lem:twogap} and argue that once $\ell$ is so large that
$$ \frac{1}{F_{k_{\ell}}} \geq \frac{r}{n},$$
 we have $\# S = \mathcal{O}(1)$. Note that 
 $$  \frac{1}{F_{k_{\ell}}} \geq \frac{1}{F_{k- \ell}} \qquad \mbox{and thus the inequality is implied by} \qquad \frac{1}{F_{k- \ell}} \geq \frac{r}{n}.$$
 Recalling that $F_{k} \leq A < F_{k+1}$ and $F_{k-\ell} \sim \phi^{-\ell} F_k$ this shows that $\ell = \mathcal{O}(\log{r})$ is sufficient to deduce that
 $$ F_{k-\ell} \lesssim \frac{F_k}{r} \lesssim \frac{A}{r} \lesssim \frac{n}{r}.$$
 The desired estimate we then get is
 $$ \# Y = \left(\sum_{j=1}^{\ell} F_{k_{j}} \right) \frac{r}{n} + \mathcal{O}(\log{r}).$$
This is very close to the desired result except that the sum over the Fibonacci series has been truncated:  note that
$$ A - \sum_{j=1}^{\ell} F_{k_{j}} \leq 2 \cdot F_{k_{\ell}} \leq \frac{2A}{r} \leq \frac{2n}{r}$$
and thus
\begin{align*}
    \# Y &= \left(\sum_{j=1}^{\ell} F_{k_{j}} \right) \frac{r}{n} + \mathcal{O}(\log{r}) =  A \frac{r}{n} + \left(A-\sum_{j=1}^{\ell} F_{k_{j}} \right) \frac{r}{n} + \mathcal{O}(\log{r}) \\
    &= A \frac{r}{n}+ \mathcal{O}(\log{r})
\end{align*} 
as desired.
\end{proof}

\end{document}